\newlength{\NotationsTextwidth}
\newtheorem{theorem}{Theorem}[section]
\newtheorem{definition}[theorem]{Definition}
\newtheorem{lemma}[theorem]{Lemma}
\newtheorem{proposition}[theorem]{Proposition}
\newtheorem{corollary}[theorem]{Corollary}
\theoremstyle{remark}
\newtheorem{remark}[theorem]{Remark}
\newcommand{\myparagraph}[1]{} 
\title{On the Problem of Mixed-Tateness of the Motives of $G$-Varieties}
\author{\myparagraph{Esmail Arasteh Rad and }Somayeh Habibi}
\date{\today}
\begin{document}
\maketitle
\begin{abstract}

Building on earlier work concerning the motives of $G$-bundles, we study the structure of motives associated with certain classes of $G$-varieties. In particular, we show that the corresponding motives lie within the category of mixed-Tate motives, under certain condition on the stabilizers. We further discuss some applications and provide some examples to illustrate the limitations.   
\end{abstract}

\section{Introduction}

Mixed-Tateness is a strong motivic property with significant arithmetic and geometric consequences. 
For example, over finite fields, varieties with mixed Tate motives have counting functions given by 
polynomials in the field size, linking them to $\mathbb{F}_1$-geometry and 
combinatorial models such as toric varieties \cite{Soule2004,Deitmar2005}. 
On the motivic side, mixed Tate objects form a tannakian subcategory with relatively simple Galois 
groups, making their structure more accessible and allowing explicit computations 
\cite{Brown2012}. They thus serve as a natural testing ground for conjectures in the 
theory of motives and periods, e.g. see \cite{Brosnan2005} and  \cite{ConnesConsaniMarcolli2008}. Mixed Tate motives also play a remarkable role from quantum field theory point of view. It was originally conjectured that the residues of Feynman integrals in scalar field theories are always periods of mixed Tate motives. This conjecture follows from another conjecture by Kontsevich, which proposed that graph hypersurfaces are mixed Tate; see \cite{Marcolli2010}. However, it was later observed that certain Feynman graphs can give rise to K3 surfaces \cite{BrownSchnetz2012}. Nevertheless, it remains an important problem to classify the graphs whose corresponding hypersurface has mixed Tate property. They also appear in a model of gravity based on the spectral action functional, see \cite{Fa-Ma}.

As we mentioned above, the question of when algebraic varieties have mixed-Tate motives appears in diverse settings, including configuration spaces, toric varieties, moduli spaces of bundles, and many varieties arising naturally in the representation theory of algebraic groups. In the latter the favorable symmetry and the corresponding combinatorial structures, together with the properties of stabilizers, govern the outcome.

Using De Concini-Procesi wonderful compactification \cite{DeConciniProcesi1983}, and motivic Leray-Hirsch, the author produced a nested filtration on the motive $M(\mathcal G)$, of a $G$-bundle $\mathcal G$ over $X$, see \cite{AH17} or  \cite{AH25} and \cite{MyThesis}. Then, in \cite{H-R}, using Huber and Kahn slice filtration \cite{H-K}, a two sided implication is proved, showing that the motive of $\mathcal G$ is mixed Tate if and only if $X$ is mixed Tate, under certain circumstances. As an application of these results, in this note, we discuss the mixed Tateness of the motive of certain $G$-varieties.

\section*{Acknowledgment:}
I warmly thank B. Kahn for useful comments, and E. Arasteh
 Rad for useful comments and inspiring discussions. I am grateful to L.
 Barbieri Viale for steady encouragement.\\
 This research was in part supported by a grant from IPM (No.1402140032).

\tableofcontents

\section{Notation}

To denote the motivic categories over $k$, such as 

$$
\textbf{DM}_{gm}(k),~ \textbf{DM}_{gm}^{eff}(k),~ \textbf{DM}_{-}^{eff}(k),~\text{etc.} 
$$  
and the functors $M:\textbf{Sch}_k\rightarrow \textbf{DM}_{gm}^{eff}(k)$ and $M^c:\textbf{Sch}_k\rightarrow \textbf{DM}_{gm}^{eff}(k)$ we use the same notation that was introduced  in \cite{VSF}. 
When $k$ is of positive characteristic, we assume coefficients are in $\mathbb{Q}$. 
\\

For the definition of the geometric motives with compact support in positive characteristic we also refer to \cite[Appendix B]{H-K}.\\

\section{Motive of Rationally Special $G$-varieties}

Let us first recall the definition of the category of mixed-Tate motives:

\begin{definition}
We denote by $TDM^{eff}_{gm} (k)$ the thick tensor subcategory of $DM^{eff}_{gm} (k)$,
generated by $\mathbb{Z}(0)$ and the Tate object $\mathbb{Z}(1)$. An object of  $TDM^{eff}_{gm} (k)$ is called a mixed Tate motive.  $TDM^{eff}_-(k)$ is the localizing subcategory of  $DM^{eff}_- (k)$
generated by  $TDM^{eff}_{gm} (k)$.
\end{definition}

Let $G$ be a connected split reductive algebraic group over an algebraically closed field $k$.
Let us first recall the following result:

\begin{theorem}\label{G-bundles, two sided statement}
Let $X$ be a principal $G$-bundle over $Y$. Furthermore, assume $X$ is locally trivial for the Zariski topology on $Y$.  If $Y$ is smooth and $M(X)$ is mixed Tate, then $M(Y)$ is mixed Tate. Moreover if $M(Y)$ is mixed Tate, then $M(X)$ is mixed Tate.
\end{theorem}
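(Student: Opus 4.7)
The plan rests on two structural inputs. First, for split reductive $G$ over an algebraically closed field, the motive $M(G)$ is mixed Tate---in fact a direct sum of Tate twists---coming from the Bruhat decomposition of $G/B$ together with the product structure of the maximal torus and the affine-bundle structure $B\to T$. Second, the authors' earlier work \cite{AH17, AH25}, via the De Concini--Procesi wonderful compactification and motivic Leray--Hirsch, equips $M(X)$ with a nested filtration whose successive subquotients are Tate twists of $M(Y)$. These two inputs reduce the theorem to transferring mixed-Tateness back and forth across such a filtration.

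For the easy direction, assume $M(Y)\in TDM^{eff}_{gm}(k)$. Each subquotient of the nested filtration on $M(X)$ is then a Tate twist of an object of $TDM^{eff}_{gm}(k)$, hence again lies in $TDM^{eff}_{gm}(k)$. Since $TDM^{eff}_{gm}(k)$ is a thick tensor subcategory of $DM^{eff}_{gm}(k)$, in particular closed under extensions, iterated triangle-chasing up the filtration yields $M(X)\in TDM^{eff}_{gm}(k)$. No smoothness hypothesis on $Y$ is used here, consistent with the asymmetric formulation of the theorem.

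For the converse, suppose $M(X)$ is mixed Tate; the goal is to recover mixed-Tateness of $M(Y)$, and this is where I expect the real difficulty. The plan is to invoke the Huber--Kahn slice filtration on $DM^{eff}_-(k)$ from \cite{H-K}: an object lies in $TDM^{eff}_-(k)$ precisely when its slices $s_n$ have the appropriate pure Tate form. The strategy is to translate the mixed-Tate hypothesis on $M(X)$ into the corresponding statement about its slices, and then, using the nested filtration as a bridge, to read off the slices of $M(Y)$ from those of $M(X)$. Smoothness of $Y$ enters at this point in an essential way, since the slice-filtration formalism and its good behaviour under the relevant truncations presuppose that one is working with the motive of a smooth scheme. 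The principal obstacle is precisely this extraction step: the slices of $M(X)$ mix contributions coming from $M(Y)$ with the Tate pieces of $M(G)$, and showing that the Tate structure genuinely descends to $M(Y)$---rather than being an accident of tensoring with $M(G)$---requires the full strength of the slice-filtration machinery of \cite{H-K}, and is the step where care is needed to make the two-sided implication actually go through.
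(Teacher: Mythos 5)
The paper's own proof of this theorem is a single line: it is asserted to be ``a particular case of \cite[Thm.\ 0.1]{H-R},'' so the entire substance is delegated to that reference and there is no explicit argument here to compare your work against. Your reconstruction does correctly identify the two ingredients the paper's introduction attributes to the cited result---the nested filtration on the motive of the total space coming from the De Concini--Procesi wonderful compactification and motivic Leray--Hirsch (\cite{AH17}, \cite{AH25}), and the Huber--Kahn slice filtration (\cite{H-K})---so at the level of strategy you are aligned with the paper's source. Your easy direction (mixed-Tate $M(Y)$ implies mixed-Tate $M(X)$ by climbing the filtration using closure of the thick tensor subcategory under Tate twists and extensions) is fine as stated.

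The genuine gap is in the hard direction, and you acknowledge it yourself. Having a filtration on $M(X)$ whose graded pieces are $M(Y)(i)[j]$ does not by itself let one deduce that $M(Y)$ is mixed Tate once $M(X)$ is: thick subcategories are closed under extensions and retracts, but not under arbitrary ``de-extension.'' What is needed is a concrete mechanism to recover $M(Y)$, or enough of its slices, from the hypothesis on $M(X)$---for example a splitting exhibiting $M(Y)$ as a direct summand of $M(X)$, or an argument identifying specific slices of $M(X)$ with twists of slices of $M(Y)$, using the smoothness of $Y$ and, in the relevant setup, comparison with the compactly supported motive. Your proposal gestures at this (``read off the slices of $M(Y)$ from those of $M(X)$'') but never carries it out, and that extraction step is precisely where all the content of \cite[Thm.\ 0.1]{H-R} lies. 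Until it is reproduced or adapted in detail, your write-up is a plausible outline rather than a proof of the two-sided implication; in the present paper the honest move is simply to cite \cite{H-R} as the authors do.
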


\begin{proof}
  This is a particular case of \cite[Thm. 0.1]{H-R}.
\end{proof}

\begin{corollary}\label{CorGismixedTate}
 The motive $M(G)$ of a connected split reductive group $G$ is mixed Tate.
\end{corollary}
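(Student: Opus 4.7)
The plan is to apply Theorem \ref{G-bundles, two sided statement} directly to the trivial principal $G$-bundle $G \to \mathrm{Spec}(k)$. Taking $X := G$ (acting on itself by translation) and $Y := \mathrm{Spec}(k)$, the bundle is globally, hence Zariski-locally, trivial; the base is smooth; and $M(\mathrm{Spec}(k)) = \mathbb{Z}(0)$ lies in $TDM^{eff}_{gm}(k)$ by definition. The ``moreover'' clause of Theorem \ref{G-bundles, two sided statement} then delivers, in a single step, that $M(G) = M(X)$ is mixed Tate.

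The one thing I would want to verify carefully before being fully satisfied is that this application is not secretly circular. The introduction indicates that the proof of Theorem \ref{G-bundles, two sided statement} cited from \cite{H-R} proceeds through a nested filtration on $M(\mathcal{G})$ built from the De Concini--Procesi wonderful compactification together with a motivic Leray--Hirsch statement; if that construction in turn presupposes mixed-Tateness of $M(G)$, the short argument above is vacuous. To guard against this, I would keep in reserve an independent route via the flag variety. The Bruhat decomposition equips $G/B$ with an affine paving, so $M(G/B)$ is a finite direct sum of Tate twists; the projection $G/T \to G/B$ is a Zariski-locally trivial affine bundle with fiber $B/T \simeq U$ an affine space, so homotopy invariance gives $M(G/T) \simeq M(G/B)$, still mixed Tate; and $G \to G/T$ is a Zariski-locally trivial principal $T$-bundle (via Hilbert~90 in the form $H^1_{\mathrm{Zar}}(-,T) = H^1_{\text{\'et}}(-,T)$ for split tori). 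Theorem \ref{G-bundles, two sided statement}, applied now with ``$G$'' replaced by the connected split reductive group $T$ and with base $G/T$, then yields $M(G)$ mixed Tate without risk of circularity.

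The main (and essentially only) obstacle I foresee is this circularity check; once it is settled one way or the other, both routes are entirely mechanical. I expect the first route to suffice, with the flag-variety argument serving as a safety net.
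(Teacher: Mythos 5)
Your primary argument — applying the ``moreover'' clause of Theorem \ref{G-bundles, two sided statement} to the trivial bundle $G \to \mathrm{Spec}(k)$, with $M(\mathrm{Spec}(k)) = \mathbb{Z}(0)$ trivially mixed Tate — is precisely what the paper has in mind when it dismisses the proof as ``Obvious.'' The circularity worry is reasonable to flag, but the theorem as stated imposes no hypothesis on $M(G)$ beyond $G$ being connected split reductive, so the one-line application is formally sound (the substance being pushed into the proof of \cite[Thm.~0.1]{H-R}); your fallback via the Bruhat paving of $G/B$, the affine-bundle $G/T \to G/B$, and Hilbert~90 for split tori is a correct independent route and is, in effect, the classical argument for this fact (cf.\ Biglari \cite{Big12}).
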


\begin{proof} Obvious.

\end{proof}
  
 \begin{proposition}
     The motive associated to the variety $V_+(det(x_{ij}))$ in $\mathbb{P}^{n^2-1}$ is mixed Tate.
 \end{proposition}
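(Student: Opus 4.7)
The plan is to identify the open complement of $V_+(\det(x_{ij}))$ in $\mathbb{P}^{n^2-1}$ with $PGL_n$ and extract the desired motive from the localization triangle for motives with compact support.

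Specifically, a class $[x_{ij}] \in \mathbb{P}^{n^2-1}$ lies in the complement of $V_+(\det(x_{ij}))$ precisely when the representative matrix is invertible, so this complement is $GL_n/\mathbb{G}_m = PGL_n$, where the central $\mathbb{G}_m$ acts by scalar multiplication. Since $PGL_n$ is again a connected split reductive group over the algebraically closed field $k$, Corollary~\ref{CorGismixedTate} gives $M(PGL_n) \in TDM^{eff}_{gm}(k)$. Because $PGL_n$ is smooth of pure dimension $d = n^2 - 1$, Poincar\'e duality in $DM_{gm}$ (available in characteristic zero, and in positive characteristic with the $\mathbb{Q}$-coefficient convention fixed in Section~2) identifies $M^c(PGL_n)$ with $M(PGL_n)^{\vee}(d)[2d]$. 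As the thick tensor subcategory $TDM^{eff}_{gm}(k)$ is closed under duals and Tate twists, $M^c(PGL_n)$ is also mixed Tate.

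Next, I would invoke Voevodsky's localization triangle for $M^c$ attached to the closed immersion $V_+(\det(x_{ij})) \hookrightarrow \mathbb{P}^{n^2-1}$ with open complement $PGL_n$:
$$M^c(V_+(\det(x_{ij}))) \longrightarrow M^c(\mathbb{P}^{n^2-1}) \longrightarrow M^c(PGL_n) \longrightarrow M^c(V_+(\det(x_{ij})))[1].$$
The middle term coincides with $M(\mathbb{P}^{n^2-1})$ by properness and is manifestly mixed Tate, while the right-hand term is mixed Tate by the previous step. Thickness of $TDM^{eff}_{gm}(k)$ then forces $M^c(V_+(\det(x_{ij})))$ to be mixed Tate. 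Finally, since $V_+(\det(x_{ij}))$ is a closed subscheme of projective space and hence projective, $M^c(V_+(\det(x_{ij}))) = M(V_+(\det(x_{ij})))$, which yields the claim.

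The main obstacle is that the determinantal hypersurface is singular along the locus of matrices of rank at most $n-2$, so the smooth Gysin triangle is unavailable; one is obliged to work with motives with compact support throughout. This in turn forces the two translations between $M$ and $M^c$, namely at $PGL_n$ via Poincar\'e duality and at $V_+(\det(x_{ij}))$ via properness, both of which are cheap under the standing hypotheses on $k$ and on coefficients.
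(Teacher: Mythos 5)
Your proof is correct, and it takes a genuinely different route than the paper's. The paper stratifies the determinantal hypersurface by rank, realizes each stratum $X_r$ as a homogeneous space for $GL_n$ (via the torsor mechanism of Theorem~\ref{G-bundles, two sided statement} together with Hilbert 90 and an affine-bundle reduction), and then concludes by a "stratified mixed Tate'' lemma from~\cite{H-R}. You instead use a single application of the localization triangle for $M^c$: the open complement of $V_+(\det(x_{ij}))$ in $\mathbb{P}^{n^2-1}$ is $PGL_n$, whose ordinary motive is mixed Tate by Corollary~\ref{CorGismixedTate}, and whose compactly supported motive is therefore mixed Tate by Poincar\'e duality; properness of $\mathbb{P}^{n^2-1}$ and of $V_+(\det)$ lets you pass between $M$ and $M^c$ at the other two vertices of the triangle, and thickness does the rest. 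Your approach trades the paper's orbit-stratification analysis (which requires identifying stabilizers and verifying the torsor hypotheses repeatedly) for a one-step use of duality and the localization sequence; it is shorter and avoids the somewhat delicate description of the rank-strata stabilizers in the paper. The one point you should state explicitly is the standard but non-immediate fact that the thick subcategory of $DM_{gm}(k)$ generated by all $\mathbb{Z}(n)$, $n\in\mathbb{Z}$, is closed under duality and that an effective object of it lands back in $TDM^{eff}_{gm}(k)$; alternatively, this can be sidestepped by invoking the explicit Tate decomposition of $M(G)$ for split reductive $G$ as in~\cite{Big12}, which makes $M^c(PGL_n)$ transparently a sum of effective Tate twists.
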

\begin{proof}
By the Corollary \ref{CorGismixedTate}, $M(GL_n)$ is mixed Tate. Let $X_r$ denote the subvariety consisting of rank $r$ matrices in $V(det(x_{ij})) \subseteq \mathbb{A}^{n^2}$. The variety $X_r$ can be realized as the quotient of $GL_n$ by the stabilizer of its action on $X_r$,which is isomorphic to $ GL_r \times  \mathrm{U} $, where $\mathrm{U}$ is the unipotent subgroup. Consider the map $GL_n \to GL_n/GL_r$, now mixed Tateness of $GL_n$ together with Hilbert Satz90 and the theorem \ref{G-bundles, two sided statement} implies that $M(GL_n/GL_r)$ is mixed Tate. As $GL_n/GL_r$ is affine bundle over $X_r$, we argue that $M(X_r)$ is mixed Tate. Thus again by theorem \ref{G-bundles, two sided statement} and Hilbert Satz90, the motive of the subvariety $\Tilde{X_r}=(X_r-\{0\}) / \mathbb{G}_m$  in $V_+(det(x_{ij}))$ is mixed Tate. We argue that $V_+(det(x_{ij}))$ is stratified mixed Tate and the Proposition follows from \cite{H-R}, Lemma 2.6.

\end{proof}

\begin{definition}

    \begin{enumerate}
        \item We say that the pair $(G,H)$, with $G$ connected reductive and $H \subset G$ a connected closed subgroup, is a rationally special pair if the class of $H$-torsor $G\to G/H$ maps to the trivial element in $H^1(K,H)$, under the obvious map
        $H^1(G/H,H)\to H^1(K,H)$. Here $K$ denotes the function field of the homogeneous space $G/H$.
        \item Let $X$ be a quasi-projective $G$-variety. We say that $X$ is a rationally special $G$-variety if for all points $x$ of $X$ the pair $(G,stab_x)$ is a rationally special pair.

    \end{enumerate}

\end{definition}

\begin{remark}
    Let $K = k(G/H)$ be the function field of the variety $G/H$. Note that for a reductive group $G$ and a closed subgroup $H$ the class of the generic fiber $\pi^{-1}(\eta)$ of $\pi:G \to G/H$, in the cohomology set $H^1(K, H)$ might be trivial, however, the full cohomology set $H^1(K, H)$ is nontrivial. As an example, set $G = \mathrm{SO}(4)$, $H = \mathrm{SO}(3)$. Furthermore, assume that $char~k$ is not equal to $2$ and $k$ contains square root of $-1$.
We embed $H$ into $G$ as the subgroup fixing the first coordinate:
\[
H \hookrightarrow G, \quad B \mapsto \begin{pmatrix} 1 & 0 \\ 0 & B \end{pmatrix}.
\]
Then $H$ is a closed subgroup of $G$, and the quotient variety $G/H$ is isomorphic to the $3$-dimensional sphere:
\[
S^3 = \{ (x,y,z,w) \in \mathbb{A}^4 : x^2 + y^2 + z^2 + w^2 = 1 \}.
\]
Indeed, $G$ acts transitively on $S^3$, and the stabilizer of $(1,0,0,0)$ is exactly $H$.
 
 As $S^3$ can be identified with the group of unit quaternions over $k$, its tangent bundle is parallelizable. Since the $SO(3)$-bundle $\pi$ can be identified with the space of orthonormal frames of the tangent bundle, we see that the corresponding class in the cohomology set $H^1(K, \mathrm{SO}(3))$ is trivial. 

On the other hand, the cohomology set $H^1(K, \mathrm{SO}(3))$ classifies $3$-dimensional nondegenerate quadratic forms over $K$ of trivial discriminant, up to isometry. Since $K$ is a purely transcendental extension of $k$ of transcendence degree $3$, there exist many anisotropic $3$-dimensional quadratic forms of trivial discriminant. For example, consider
\[
q(x,y,z) = x^2 + t y^2 + t z^2, \qquad t \in K^\times \text{ not a square}.
\]
Its Gram matrix is $\operatorname{diag}(1,t,t)$, so
\[
\det(q) = t^2,\quad
\mathrm{disc}(q) = (-1)^{3} \det(q) \equiv -t^2 \equiv -1 \in K^\times/(K^\times)^2.
\]
Since $k$ is algebraically closed, $-1$ is a square in $K$, hence the discriminant of $q$ is trivial. As $t$ is not a square in $K$, this quadratic form is not isometric to the split form $\langle 1,1,1 \rangle$. Thus $q$ defines a nontrivial element of $H^1(K, \mathrm{SO}(3))$.

\end{remark}

\begin{theorem}\label{thmmain}
 Let $X$ be a rationally special $G$-variety. Then the motive $M(X)$ associated with $X$ in the Voevodsky's category of geometric motives $DM_{gm}(k)$ is mixed Tate.  
\end{theorem}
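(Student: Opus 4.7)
The plan is to combine a $G$-orbit stratification with an application of Theorem~\ref{G-bundles, two sided statement} on a dense open of each orbit, using the generic triviality supplied by the rationally special hypothesis.

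First I would stratify $X$ into its $G$-orbits, $X = \bigsqcup_i O_i$, where $O_i \cong G/H_i$ with $H_i = \operatorname{stab}_{x_i}$. Each pair $(G, H_i)$ is rationally special by assumption. Using the stratified mixed-Tateness criterion \cite[Lemma 2.6]{H-R}, in the same way as in the preceding Proposition, the problem reduces to proving $M(G/H)$ is mixed Tate whenever the pair $(G, H)$ is rationally special. This focuses the argument on a single homogeneous space.

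Next, for such a pair I would exploit the generic triviality of the $H$-torsor $\pi: G \to G/H$: the hypothesis provides a dense open $U \subset G/H$ with $\pi^{-1}(U) \cong U \times H$, so that $\pi|_U$ is Zariski-trivial and in particular Zariski-locally trivial. Since $M(G)$ is mixed Tate by Corollary~\ref{CorGismixedTate} and $\pi^{-1}(U)$ is open in $G$, I would use the localization triangle for the decomposition $G = \pi^{-1}(U) \sqcup \pi^{-1}(Z)$, with $Z = G/H \setminus U$, to propagate mixed-Tateness from $M(G)$ to $M(\pi^{-1}(U))$; Theorem~\ref{G-bundles, two sided statement} then yields $M(U)$ mixed Tate. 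To pass from $U$ to all of $G/H$, I would proceed by Noetherian induction on $\dim G/H$, gluing via the localization triangle $M(U) \to M(G/H) \to \cdots$ against an inductive mixed-Tate statement for $M(Z)$.

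The main obstacle is precisely this inductive step: since $G$ acts transitively on $G/H$, the complement $Z \subset G/H$ is not $G$-stable, and the rationally special $G$-variety structure does not restrict directly to $Z$, so one cannot simply re-apply the theorem. Overcoming this will require a finer $G$-equivariant stratification of $G/H$, most naturally obtained by passing to a $G$-equivariant (wonderful) compactification along the lines of \cite{AH17, AH25}, arranged so that the boundary strata are themselves $G$-orbits carrying a rationally special structure. With such a stratification in place, each inductive step falls again within the reach of Theorem~\ref{G-bundles, two sided statement}, and the mixed-Tate conclusion for $M(G/H)$---and hence, by the orbit stratification, for $M(X)$---follows.
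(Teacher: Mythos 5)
Your opening move is right and matches the paper: stratify $X$ into $G$-orbits and reduce, via \cite[Lem.~2.6]{H-R}, to showing that $M(G/H)$ is mixed Tate for each rationally special pair $(G,H)$. From there, however, the argument develops a genuine gap that you partly identify yourself but do not close, and your proposed repair is not the right one.

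The localization step is already problematic. From the decomposition $G = \pi^{-1}(U) \sqcup \pi^{-1}(Z)$, a localization (or Gysin) triangle relates $M(\pi^{-1}(U))$, $M(G)$ and a third term involving $\pi^{-1}(Z)$. Knowing only that the middle term $M(G)$ is mixed Tate does not let you conclude that $M(\pi^{-1}(U))$ is mixed Tate; the thick-subcategory property requires two of the three vertices, not one. So the propagation ``from $M(G)$ to $M(\pi^{-1}(U))$'' does not go through as stated. The subsequent Noetherian induction is then, as you note, blocked because $Z = (G/H)\setminus U$ is not $G$-stable and has no intrinsic rationally special structure. Your proposed workaround --- a finer $G$-equivariant stratification via a wonderful compactification --- does not apply here: $G/H$ is already a single $G$-orbit, so there is no finer equivariant stratification of its interior, and compactifications only add boundary strata, which do not help you decompose $Z$.

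The missing idea, and the one the paper actually uses, is much simpler: once you have a regular section of $\pi: G \to G/H$ over a nonempty open $U$, you may translate $U$ by the $G$-action. For $g \in G$ the translate $gU$ also carries a section (namely $x \mapsto g\,s(g^{-1}x)$), hence a trivialization $\pi^{-1}(gU) \cong gU \times H$. Since $G$ acts transitively on $G/H$, the opens $\{gU\}_{g\in G}$ cover $G/H$. Thus $\pi$ is Zariski-locally trivial over \emph{all} of $G/H$, not merely over $U$, and Theorem~\ref{G-bundles, two sided statement} applies directly with $X = G$, $Y = G/H$ and Corollary~\ref{CorGismixedTate}, with no localization triangle and no induction at all. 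With that observation, the rest of your outline (orbit stratification plus \cite[Lem.~2.6]{H-R}) completes the proof.
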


\begin{proof}
    The $G$-action defines a stratification $\{X_i\}_{i\in I}$ of $X$ to the $G$-orbits. Now, we have $X_i\cong G/stab_{x_i}$ for a closed point $x_i$ in $X_i$. As the pair $(G,H_i:=stab_{x_i})$ is special, the $H_i$-torsor $\pi_i:G \to G/H_i$ admits a rational section defined over the function field $K_i$ of $X_i$, which extends to a regular section over some nonempty Zariski open $U_i \subset X_i$. Over this open set we have
    \[
  \pi_i^{-1}(U_i) \cong U_i \times H_i.
  \]
Translating $U_i$ by the action of $G$ yields a Zariski open covering by such trivializations. Hence, we may argue that $X_i$ is mixed Tate by theorem \ref{G-bundles, two sided statement} and corollary \ref{CorGismixedTate}. Now since $X$ admits an stratification by mixed Tate varieties, we observe that $M(X)$ is mixed Tate, see \cite[Lem. 2.6]{H-R}.

\end{proof}

\begin{remark}
    Note in particular that, according to the theorem \ref{thmmain}, for a rationally special pair $(G,H)$, the motive of the quotient variety $G/H$ is mixed Tate.
\end{remark}

\begin{remark}

Note that, since for a rationally special pair $(G,H)$ the quotient map $\pi:G \to G/H$ is Zariski locally trivial with fiber $H$, we have the equality $[G] = [G/H]\cdot[H]$ in the Grothendieck ring of varieties $K_0(\mathrm{Var}_k)$.
\end{remark}

\begin{proposition}
The $SO_n$-torsor 
\[
SL_n \;\longrightarrow\; SL_n/SO_n
\]
is not Zariski-locally trivial for $n\ge 3$ over a field $k$ with $\operatorname{char}k\neq 2$. Moreover, the class corresponding to the generic fiber in $H^1(K,SO_n)$ is not trivial, here $K$ is the function field of $SL_n/SO_n$. 
\end{proposition}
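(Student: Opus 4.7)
The plan is to identify $SL_n/SO_n$ as the variety of symmetric matrices of determinant one, recognize the generic fiber as the isometry class of a specific quadratic form over $K$, and detect its non-triviality by computing a Hasse--Witt invariant after pulling back along a carefully chosen two-parameter subvariety. The computation itself is fairly mechanical; the subtleties are rather in (a) pinning down the dictionary between the torsor and its associated quadratic form in the correct direction, and (b) passing from non-triviality on the subvariety back to non-triviality of the generic fiber on all of $Y$, which I would carry out via the homogeneity of $Y$ under $SL_n$.

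First, I would show that the morphism $SL_n \to \mathrm{Sym}_n$, $g \mapsto g g^T$, identifies $SL_n/SO_n$ with the smooth affine variety $Y := \{S \in \mathrm{Sym}_n : \det S = 1\}$: over an algebraically closed $k$ with $\operatorname{char}k \neq 2$, every such $S$ is of the form $g g^T$ with $g \in SL_n$, and each fiber is an $SO_n$-torsor under the right action of $SO_n$. Under the standard dictionary between $SO_n$-torsors over a field $F$ and isometry classes of rank-$n$ quadratic forms over $F$ with trivial discriminant, the class in $H^1(K, SO_n)$ of the generic fiber is the form associated with the tautological symmetric matrix on $Y$.

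To exhibit non-triviality, I would consider the two-parameter closed subvariety $Z \subset Y$ parameterized by
\[
(t_1, t_2)\ \longmapsto\ \mathrm{diag}\bigl(t_1,\ t_2,\ 1,\ldots,1,\ (t_1 t_2)^{-1}\bigr),
\]
so that $k(Z)=k(t_1,t_2)$; the pullback torsor over $\mathrm{Spec}\,k(Z)$ corresponds to the diagonal form $q_Z = \langle t_1, t_2, 1, \ldots, 1, (t_1 t_2)^{-1}\rangle$. Its Hasse--Witt invariant in $H^2(k(t_1,t_2),\mathbb{Z}/2)$ can then be computed term by term: using that $-1$ is a square in $k$ (so $(a,a) = 0$ and $(a, b^{-1}) = (a, b)$ modulo $2$), the only contributing pairs are those involving $t_1, t_2$, and $(t_1 t_2)^{-1}$, and they collapse to $w_2(q_Z) = (t_1, t_2)$. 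This class represents the generic quaternion algebra over $k(t_1, t_2)$, which is a division algebra (e.g.\ its residue at the $t_1$-adic valuation is the non-square class $t_2 \in k(t_2)^\times/k(t_2)^{\times 2}$), so $(t_1, t_2) \neq 0$. Since $w_2(\langle 1,\ldots,1\rangle) = 0$, the form $q_Z$ is not isometric to the split form, and the pullback torsor is therefore non-trivial.

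To conclude both claims at once, I would invoke homogeneity: if the generic fiber in $H^1(K, SO_n)$ were trivial, then the torsor would be trivial on some dense open $U \subset Y$; translating $U$ by $SL_n(k)$ yields a Zariski cover of $Y$ by trivializing opens (since $SL_n$ acts transitively on $Y$), so the torsor is Zariski-locally trivial and in particular its pullback to $Z$ must have trivial generic fiber, contradicting the computation above. Hence the generic fiber is non-trivial in $H^1(K, SO_n)$, and a fortiori the torsor fails to be Zariski-locally trivial.
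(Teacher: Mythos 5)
Your proof is correct and takes essentially the same route as the paper's: identify $SL_n/SO_n$ with $\{S\in\mathrm{Sym}_n:\det S=1\}$, read off the generic fiber as the isometry class of the tautological symmetric form, specialize to a two-parameter diagonal family of the shape $\mathrm{diag}(t_1,t_2,(t_1t_2)^{-1},1,\dots,1)$, and detect the quaternion symbol $(t_1,t_2)$ by a cohomological invariant. The only variations are cosmetic — the paper uses the Brauer class of the even Clifford algebra and checks nontriviality of the quaternion algebra over $k((a))((b))$, whereas you use the Hasse--Witt invariant $w_2$ and a residue at the $t_1$-adic valuation; these are equivalent in substance for forms of trivial discriminant. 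One small improvement on your side is the explicit homogeneity argument that passes from ``generic fiber trivial'' to ``pullback to $Z$ trivial,'' a step the paper leaves implicit (it is tacitly using smoothness of the base and injectivity of $\mathrm{Br}$ into $\mathrm{Br}$ of the function field).
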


\begin{proof}
Let \(X := SL_n/SO_n.\) One may identify \(X\) with symmetric matrices \(Sym_n\) with \(\det S = 1 \). Thus the corresponding coordinate ring is
\[
k[X] = k[s_{ij}\ (1\le i\le j\le n)]/(\det(s_{ij})-1).
\]
The matrix $S=(s_{ij})$ with entries the coordinate functions is called the
\emph{universal symmetric matrix}.
Define the \emph{universal quadratic form}
\[
q_{\mathrm{univ}}(v) = v^T S v, \qquad v\in k[X]^n,
\]
and after base change to the function field $k(X)$, this gives
\[
q_{\mathrm{univ}}\in \mathrm{Quad}_n(k(X)).
\]

The projection $SL_n\to X$ is a principal $SO_n$-bundle (torsor).  
Its restriction to the generic point $Spec~k(X)\to X$ gives a class
\[
[q_{\mathrm{univ}}] \in H^1(k(X),SO_n),
\]
which corresponds to the quadratic form $q_{\mathrm{univ}}$.
If the torsor $SL_n\to X$ were Zariski-locally trivial, then its class in
$H^1(k(X),SO_n)$ would be trivial, i.e.\ $q_{\mathrm{univ}}$ would be
isometric to the split quadratic form of rank $n$ over $k(X)$. To see that this is not the case, consider the even Clifford algebra
$C_0(q_{\mathrm{univ}})$, whose Brauer class
\[
\gamma := [C_0(q_{\mathrm{univ}})] \in \operatorname{Br}(k(X))
\]
is an invariant of the isometry class of $q_{\mathrm{univ}}$.  
If $q_{\mathrm{univ}}$ were split then $\gamma=0$. Let's compute a specialization. Define
\[
S_{a,b} = \operatorname{diag}\!\big(a,\;b,\;(ab)^{-1},\;1,\dots,1\big)
\in Sym_n\big(k(a,b)\big).
\]
This has determinant $1$, hence defines a rational point
$Spec~k(a,b)\to X$.  
The specialization of $\gamma$ to $\operatorname{Br}(k(a,b))$ is the
Brauer class of the form
\[
q_{a,b} = \langle a,\ b,\ (ab)^{-1},\ 1,\dots,1\rangle.
\]

Now consider the ternary subform
\[
q_3 = \langle a,\ b,\ (ab)^{-1}\rangle.
\]
A direct computation of Clifford algebras for diagonal forms shows that
\[
C_0(q_3) \;\cong\; (a,b),
\]
the quaternion algebra over $k(a,b)$ with generators
$i,j$ subject to $i^2=a$, $j^2=b$, $ij=-ji$.  
Thus the specialization of $\gamma$ contains the quaternion class $(a,b)$.

Finally, the quaternion algebra $(a,b)$ over $k(a,b)$ is nontrivial in
$\operatorname{Br}(k(a,b))$.  
Indeed, after embedding $k(a,b)\hookrightarrow k((a))((b))$,
the algebra $(a,b)$ remains a division algebra over the iterated Laurent
series field, hence represents a nonzero Brauer class.  
Therefore $\operatorname{res}(\gamma)\neq 0$, and so $\gamma\neq 0$ in
$\operatorname{Br}(k(X))$.

Consequently, $q_{\mathrm{univ}}$ is not isometric to the split form over
$k(X)$, so the torsor class in $H^1(k(X),SO_n)$ is nontrivial.
It follows that the principal $SO_n$-bundle $SL_n\to SL_n/SO_n$ is not
Zariski-locally trivial.
\end{proof}

As we have seen above the quotient map $SL_n\to SL_n/SO_n$ is not Zariski locally trivial. In addition, as the counting formula indicates, see remark \ref{remCount} below, the motive $M(Sym_n^\times)$ is not mixed Tate in $DM_{gm}(k)$, however, as it will be shown in the proposition \ref{propSym_QisMT}, this is still the case after passing to rational coefficients.

Fix an \(n\)-dimensional vector space \(V\) over a field \(k\). 
The group \(GL_n(k)\) acts by congruence on the space of symmetric, nondegenerate bilinear forms on \(V\). 
If we fix one such form \(Q_0\), its stabilizer is the orthogonal group \(O_n\) associated to \(Q_0\). 
Hence, as a homogeneous space, \(GL_n/O_n \) parametrizes nondegenerate symmetric bilinear forms on $V$ in the $GL_n$-orbit of $Q_0$.
In other words, \(GL_n/O_n\) parametrizes nondegenerate symmetric bilinear (or quadratic, if \(\mathrm{char}(k)\neq 2\)) forms of the same isometry type as the chosen form. 
Over finite fields of odd order, there is only one such orbit if \(n\) is odd, and two such orbits if \(n\) is even.

\begin{remark}\label{remCount}
\begin{enumerate}
\item[a)] 
(Odd dimension \(n=2m+1\).)
In this case there is a single isometry class of nondegenerate quadratic forms. The order of the orthogonal group is
\[
|O_{2m+1}(\mathbb{F}_q)| 
= 2\,q^{m^2}\prod_{i=1}^m \bigl(q^{2i}-1\bigr).
\]
Therefore
\[
\#(GL_{2m+1}/O_{2m+1})(\mathbb{F}_q) 
= \frac{\displaystyle\prod_{i=0}^{2m}(q^{2m+1}-q^i)}{2\,q^{m^2}\prod_{i=1}^m (q^{2i}-1)}.
\]
\item[b)] (Even dimension \(n=2m\).)  
For \(n\) even there are two isometry classes of nondegenerate quadratic forms, denoted by signs \(+\) (split) and \(-\) (nonsplit). The orders of the corresponding orthogonal groups are
\[
|O_{2m}^{\pm}(\mathbb{F}_q)| 
= 2\,q^{m(m-1)}(q^m \mp 1)\prod_{i=1}^{m-1} \bigl(q^{2i}-1\bigr).
\]
Hence
\[
\#(GL_{2m}/O_{2m}^{\pm})(\mathbb{F}_q)
= \frac{\displaystyle\prod_{i=0}^{2m-1}(q^{2m}-q^i)}{2\,q^{m(m-1)}(q^m \mp 1)\prod_{i=1}^{m-1}(q^{2i}-1)}.
\]
\end{enumerate}
\end{remark}

\noindent
Let us recall the following definition

\begin{definition}
Let $f : X \to Y$ be a proper surjective morphism of algebraic varieties.  
For each integer $k \ge 0$, set
\[
  Y^k \;=\; \{ y \in Y \;\mid\; \dim f^{-1}(y) \ge k \}.
\]
The map $f$ is called \emph{semi-small} if
\[
  \dim Y^k \;\le\; \dim X - 2k \qquad \text{for all } k \ge 0.
\]
\end{definition}

\begin{proposition}\label{propSym_QisMT}
    The motive $M(Sym_n^\times)_\mathbb Q$ is mixed Tate in $DM_{gm}(k)_\mathbb Q$.
\end{proposition}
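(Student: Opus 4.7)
The plan is to prove the proposition by strong induction on $n$, using the $GL_n$-orbit stratification of $Sym_n$ by rank. Write $Sym_n=\bigsqcup_{r=0}^{n}Sym_n^{(r)}$, where $Sym_n^{(r)}$ is the locally closed smooth stratum of symmetric matrices of rank exactly $r$; in particular $Sym_n^{(n)}=Sym_n^\times$. Since $Sym_n\cong\mathbb{A}^{n(n+1)/2}$ has mixed Tate motive with compact support, the localization triangle for $Sym_n^\times\hookrightarrow Sym_n$, combined with the smoothness of $Sym_n^\times$, reduces the problem to showing that $M(Sym_n^{(r)})_\mathbb{Q}$ is mixed Tate for every $r<n$.

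For the rank-$r$ stratum, a direct computation of the stabilizer of the base point $S_0=\operatorname{diag}(1,\dots,1,0,\dots,0)$ under the congruence action $A\cdot S=ASA^T$ gives
\[
H_r \;=\; \Bigl\{\begin{pmatrix} P & Q \\ 0 & R \end{pmatrix} : P\in O_r,\;Q\in\operatorname{Mat}_{r\times(n-r)},\;R\in GL_{n-r}\Bigr\}\subset P_r,
\]
where $P_r$ is the parabolic of $GL_n$ with Levi $GL_r\times GL_{n-r}$ and the same unipotent radical $U_r\cong\mathbb{A}^{r(n-r)}$. Consequently $P_r/H_r\cong GL_r/O_r=Sym_r^\times$, and the map $S\mapsto\ker S$ realizes the tower
\[
GL_n\;\longrightarrow\;GL_n/H_r=Sym_n^{(r)}\;\longrightarrow\;GL_n/P_r=Gr(n-r,n).
\]

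The key observation is that the parabolic $P_r$ is a \emph{special} group (in the sense of Serre--Grothendieck), being an extension of the special reductive group $GL_r\times GL_{n-r}$ by the split unipotent $U_r$. Hence the $P_r$-torsor $GL_n\to GL_n/P_r$ is Zariski-locally trivial, and so is the associated fiber bundle $Sym_n^{(r)}=GL_n\times^{P_r}(P_r/H_r)\to Gr(n-r,n)$ with fiber $Sym_r^\times$. Since $Gr(n-r,n)$ has a cellular decomposition and is thus mixed Tate, and since $Sym_r^\times$ is mixed Tate by the inductive hypothesis (with base case $n=1$, where $Sym_1^\times=\mathbb{G}_m$), a Mayer--Vietoris argument over a finite Zariski open cover trivializing the bundle implies that $M(Sym_n^{(r)})_\mathbb{Q}$ is mixed Tate. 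Equivalently, one can view $GL_n\times Sym_r^\times\to Sym_n^{(r)}$ as a $P_r$-torsor, divide by the unipotent radical $U_r$ to obtain a torsor under the connected split reductive Levi $L_r=GL_r\times GL_{n-r}$ over the smooth base $Sym_n^{(r)}$, and apply Theorem~\ref{G-bundles, two sided statement} directly. Iterated localization triangles along the closed rank stratification of $Sym_n\setminus Sym_n^\times$, followed by the final triangle for $Sym_n^\times\hookrightarrow Sym_n$, then complete the induction.

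The main obstacle is the one highlighted by the preceding proposition: the tautological $H_r$-torsor $GL_n\to Sym_n^{(r)}$ fails to be Zariski-locally trivial, because $H_r$ contains the non-special orthogonal factor $O_r$. The argument circumvents this by enlarging the structure group from $H_r$ to the special parabolic $P_r$, thereby absorbing the non-special $O_r$ into the \emph{fiber} $P_r/H_r=Sym_r^\times$ rather than into the structure group of the torsor fed to Theorem~\ref{G-bundles, two sided statement}; the resulting fiber is in turn handled inductively.
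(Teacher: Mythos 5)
Your argument is correct in substance but takes a genuinely different route from the paper. The paper works projectively: it passes to the determinantal hypersurface $Y=\{\det=0\}\subset\mathbb P(\mathrm{Sym}_2V^\vee)$, builds the incidence correspondence $\mathcal I=\{(Q,[v]):Qv=0\}$, observes that $\mathcal I\to\mathbb P(V)$ is a projective bundle (so $M(\mathcal I)_\mathbb Q$ is Tate), checks that $\mathcal I\to Y$ is semi-small by a corank count, and then invokes the de Cataldo--Migliorini motivic decomposition theorem for semi-small maps to split $M(Y)_\mathbb Q$ off as a direct summand of $M(\mathcal I)_\mathbb Q$; thickness finishes the proof. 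You instead stratify $\mathrm{Sym}_n$ affinely by rank, identify each stratum $\mathrm{Sym}_n^{(r)}=GL_n/H_r$ with $H_r\cong U_r\rtimes(O_r\times GL_{n-r})\subset P_r$, and enlarge the structure group from the non-special $H_r$ to the special parabolic $P_r$, which pushes the troublesome $O_r$ into the \emph{fiber} $P_r/H_r\cong \mathrm{Sym}_r^\times$ and sets up an induction. Both strategies are valid; yours is more elementary (no decomposition theorem) and closer to the theme of the rest of the paper, while the paper's is shorter once one grants de Cataldo--Migliorini.

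Two points worth tightening. First, the ``Mayer--Vietoris over a trivializing cover'' step is not quite self-contained: intersections of trivializing opens in the Grassmannian need not be mixed Tate, so the genuine argument is your ``equivalently'' clause. Even there, the object you want is not the product $GL_n\times \mathrm{Sym}_r^\times$ but the balanced product $T:=GL_n\times^{H_r}P_r$; this carries simultaneously the structure of a Zariski-locally trivial $P_r$-torsor over $\mathrm{Sym}_n^{(r)}$ and of a Zariski-locally trivial $GL_n$-torsor over $P_r/H_r\cong \mathrm{Sym}_r^\times$, and then quotienting by $U_r$ gives an $L_r=GL_r\times GL_{n-r}$-torsor over $\mathrm{Sym}_n^{(r)}$ to which Theorem~\ref{G-bundles, two sided statement} applies. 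Second, as written, nothing in your induction uses $\mathbb Q$-coefficients, so (in characteristic $0$, where the paper's convention allows $\mathbb Z$-coefficients) you would in fact be proving the integral statement; this is in tension with the remark preceding the proposition, which asserts via the point-count heuristic that $M(\mathrm{Sym}_n^\times)$ is not integrally mixed Tate. You should state explicitly which coefficient ring you are working in, and check whether Theorem~\ref{G-bundles, two sided statement} (via \cite{H-R}) is available integrally or only after inverting primes --- if the latter, say so; if the former, your argument actually strengthens the proposition and the remark before it deserves a second look.
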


\begin{proof}
Let \(V\) be an \(n\)-dimensional \(k\)-vector space, \(n\ge2\). \(W=Sym_2 V^\vee\) and \(N=\dim W=\dfrac{n(n+1)}2\). We identify \(\mathbb P(W)\cong\mathbb P^{N-1}\). Let \(Y\subset\mathbb P(W)\) denote the determinantal hypersurface \(\{\det=0\}\). By theorem \ref{G-bundles, two sided statement} and localizing exact sequence it is enough to show that $M(Y)$ is mixed Tate. Set \(\mathcal I=\{(Q,[v])\in\mathbb P(W)\times\mathbb P(V)\mid Qv=0\}\) with projections \(p:\mathcal I\to\mathbb P(V)\), \(\pi:\mathcal I\to\mathbb P(W)\).

The projection \(p: \mathcal I\to\mathbb P(V)\cong\mathbb P^{\,n-1}\) exhibits \(\mathcal I\) as a projective bundle:
			\[
			\mathcal I \cong \mathbb P(\mathcal E)
			\]
			for a rank \((N-n)\) vector bundle \(\mathcal E\) on \(\mathbb P^{n-1}\). Consequently \(\mathcal I\) is smooth and
			\[
			M(\mathcal I)_\mathbb Q \cong \bigoplus_{j=0}^{N-n-1} M(\mathbb P^{n-1})_\mathbb Q(j)[2j]
			.\]
It is clear that the map $\pi: \mathcal I\to \mathbb P(W)$ factors through $Y$ and gives a resolution for the singularities of $Y$. 

\begin{lemma}
Let $V$ be an $n$--dimensional vector space over an algebraically closed field,
and let $Y \subset \mathbb P(Sym_2 V^\vee)$ be the determinantal hypersurface
of singular symmetric forms. Consider the incidence variety
\[
  \mathcal I \;=\; \{ ([Q],[\ell]) \in \mathbb P(Sym_2 V^\vee) \times \mathbb P(V) \;\mid\; 
  \ell \subset \ker(Q)\},
\]
with projection $\pi:\mathcal I\to Y$. Then $\pi$ is a semi--small map.
\end{lemma}

\begin{proof}

Fix $r$, $0\le r\le n-1$, and consider the stratum $S_r\subset Y$ consisting of forms of rank $r$.
If $Q$ has rank $r$, then $\dim\ker(Q)=n-r$, and the fibre
\[
  \pi^{-1}([Q]) \;\cong\; \mathbb P(\ker Q) \;\cong\; \mathbb P^{n-r-1}
\]
has dimension $s_r=n-r-1$. The dimension of the stratum $S_r$ is
\[
  d_r \;=\; r(n-r) \;+\; \Big(\tfrac{r(r+1)}2-1\Big)
  \;=\; rn - \tfrac{r(r-1)}2 - 1.
\]
Namely, for a symmetric form $Q$ of rank $r$,the support (or the nondegenerate part) of $Q$ is an $r$-dimensional subspace $U$ of $V$ on which $Q$ restricts to a nondegenerate symmetric form.  Choosing the $r$-dimensional subspace $U$ of $V$ is a point of the Grassmannian $Gr(r,V)$, which has dimension $r(n-r)$. In addition on a fixed 
$r$-plane $U$, symmetric bilinear forms are parametrized by $Sym_2U^\vee$, which has vector-space dimension $r(r+1)/2$, then passing to projective classes. The total dimension of $\mathcal I$ is
\[
  \dim \mathcal I \;=\; \dim \mathbb P(V) + \dim \mathbb P(Sym_2 V^\vee) - n
  \;=\; (n-1) + \Big(\tfrac{n(n+1)}2 -1\Big) - n
  \;=\; \tfrac{n(n+1)}2 - 2.
\]
Hence the semi--small inequality for $S_r$ is
\[
  d_r + 2s_r \;\le\; \dim \mathcal I.
\]
Subtracting the left from the right we obtain
\[
  \Delta(r) \;:=\; \dim \mathcal I - (d_r+2s_r)
  \;=\; \tfrac{(n-r-1)(n-r-2)}{2}.
\]
Since $\Delta(r)\ge 0$ for all $0\le r\le n-1$, the inequality holds for every stratum. Thus $\pi$ is semi--small.
\end{proof}

Now, we apply the motivic decomposition theorem of Migliorini and de Cataldo for semi-small morphisms, see \cite[Theorem 4.0.4]{Cat-Mig}, to the morphism $\mathcal I\to Y$. 
Putting $\Delta(r)=0$ we observe that the map $\pi$ is semi--small, and the only relevant strata are 
the corank $1$ stratum $S_{n-1}$ and the corank $2$ stratum $S_{n-2}$. For $S_{n-1}$, one has $\pi^{-1}([Q])\cong \mathbb P^0$, hence 
$t_{n-1}=\tfrac12(\dim \mathcal I-\dim S_{n-1})=0$. The set of irreducible components of the fibre is a singleton, so the associated 
covering $Z_{n-1}\to S_{n-1}$ is trivial. One may take 
$\overline{Z}_{n-1}=Y$ as a projective compactification. For $S_{n-2}$, one has $\pi^{-1}([Q])\cong \mathbb P^1$, hence 
$t_{n-2}=\tfrac12(\dim \mathcal I-\dim S_{n-2})=1$.
Again the fibre is irreducible and the covering is trivial. Therefore, by the embedding theorem \cite[Prop. 2.1.4]{VSF}, we get the motivic splitting
\[
   M(\mathcal I)_{\mathbb Q} \;\;\cong\;\; M(Y)_{\mathbb Q} \;\oplus\; M(\overline S_{n-2})_{\mathbb Q}(1)[2],
\]
Consequently, the motive $M(Y)$ is mixed-Tate.

\end{proof}

\section{Consequences}

\subsection{Homogeneous space of an irreducible representation of $\mathrm{SL}_n$}

Irreducible algebraic representations of $SL_n$ are classified by dominant integral highest weights 
\(\lambda = a_1\omega_1 + a_2\omega_2 + \cdots + a_{n-1}\omega_{n-1}\), where the $\omega_i$ are the fundamental weights and $a_i \in \mathbb{Z}_{\ge 0}$. 
Each highest weight $\lambda$ determines a finite-dimensional irreducible representation, realized functorially via the Schur functor associated to the Young diagram of $\lambda$. 
This yields an embedding 
\[
\rho_\lambda : SL_n \hookrightarrow GL_{N_\lambda}, \qquad N_\lambda = \dim V_\lambda,
\]
whose image acts irreducibly on $\mathbb{C}^{N_\lambda}$ and is therefore not contained in any proper parabolic subgroup of $GL_{N_\lambda}$. We denote by $\mathcal X(n,\lambda)$ the quotient variety $GL_{N_\lambda}/ \rho_\lambda(SL_n)$.

Since $SL_n$ is special, by theorem \ref{thmmain}, for any weight $\lambda$, the quotient variety $\mathcal X(n,\lambda)$ is a smooth affine homogeneous variety whose motive is mixed Tate.

\begin{remark}
    A principal $\mathfrak{sl}_2$-triple $(e,h,f)$ in a simple Lie algebra $\mathfrak g$ consists of elements satisfying the standard commutation relations
\[
[h,e] = 2e, \qquad [h,f] = -2f, \qquad [e,f] = h,
\]
with $e$ being a regular nilpotent element. By the Jacobson--Morozov theorem, see \cite{Jac} , every regular nilpotent element can be completed to such a triple, and any two principal triples are conjugate under the adjoint action of $G$, the adjoint group of $\mathfrak g$. Equivalently, if $\varphi: SL_2 \hookrightarrow G$ is the group homomorphism corresponding to the triple $(e,h,f)$, then any other principal embedding is of the form $\mathrm{Ad}_g \circ \varphi$ for some $g\in G$. Consequently, the set $\mathcal P=\mathcal P(G,SL_2)$ of all principal $SL_2$ subgroups is parametrized by the conjugacy classes
\[
\{ g \,\varphi(SL_2)\, g^{-1} \mid g \in G\} \simeq G / N_G(\varphi(SL_2)),
\]
where $N_G(\varphi(SL_2))$ denotes the normalizer of the subgroup in $G$. Therefore, the motive associated with $\mathcal P$ is mixed Tate, after inverting the order of $N_G(\varphi(SL_2))/\varphi(SL_2)$.

In the context of Drinfeld--Sokolov reduction, a fixed principal $\mathfrak{sl}_2$-triple determines the nilpotent element around which one performs the Hamiltonian or BRST reduction of the affine algebra $\widehat{\mathfrak g}$. Different choices of embeddings correspond to conjugate triples and yield isomorphic reductions locally, but globally the moduli of embeddings is captured by $G/SL_2$. This construction produces the classical and quantum $W$-algebras associated with $\mathfrak g$, which appear as the chiral symmetries of Toda field theories and as integrable hierarchies in the associated Drinfeld--Sokolov system.

\end{remark}

\subsection{The symmetric homogeneous space $X_{Sp}(p,n)$}

The symmetric space 
\[
X_{Sp}(p,n) := Sp(2n)\big/ \big(Sp(2p)\times Sp(2n-2p)\big)
\]
appears as a Higgs branch of vacua in supersymmetric gauge. In theories with gauge group $Sp(2n)$ and matter in the fundamental representation, representation, assigning vacuum expectation values (VEVs) to the scalar fields can partially break the gauge symmetry as
\[
Sp(2n)  \rightsquigarrow Sp(2p)\times Sp(2n-2p),
\] 
where $p$ is determined by the number of condensed directions allowed by the D-(and F-)term constraints \cite{IntriligatorSeiberg}. The quotient space $X_{Sp}(p,n)$ parametrizes inequivalent classical vacua and is a homogeneous, highly symmetric variety, which enables exact computations of supersymmetric indices and BPS spectra via localization techniques \cite{Witten1993}. Although $X_{Sp}(p,n)$ is an open affine scheme, it admits a canonical smooth projective compactification (the De Concini--Procesi wonderful compactification), which allows rigorous control of boundary contributions in path integrals, see also corollary \ref{CorX_Sp(p,n)isMT} below. This property guarantees integrality of BPS degeneracies and restricts the types of periods and simplifies the structure of partition functions and wall-crossing formulas \cite{NekrasovOkounkov}. Symmetric spaces of this type also appear as moduli spaces of instantons or framed quiver representations in gauge theories with symplectic gauge groups, providing a direct link between geometric representation theory and supersymmetric quantum field theory, see \cite{Nakajima1999}.

\begin{corollary}\label{CorX_Sp(p,n)isMT}
The motive $M(X_{Sp}(p,n))$ of the symmetric space 
\[
X_{Sp}(p,n) = Sp(2n)\big/ \big(Sp(2p)\times Sp(2n-2p)\big)
\]
is mixed Tate. 
\end{corollary}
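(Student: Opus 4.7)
The plan is to reduce the statement to a direct application of Theorem~\ref{thmmain} by verifying that $X_{Sp}(p,n)$ is a rationally special $Sp(2n)$-variety. Since $Sp(2n)$ acts transitively on $X_{Sp}(p,n)$, the orbit stratification consists of a single stratum, and every stabilizer is conjugate to $H := Sp(2p) \times Sp(2n-2p)$. Hence it suffices to show that the single pair $(Sp(2n), H)$ is rationally special in the sense of the definition preceding the theorem.

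The key observation is that $Sp_{2m}$ is a special group in the sense of Serre--Grothendieck: every nondegenerate alternating form over a field $K$ is equivalent to the standard symplectic form, so $H^1(K, Sp_{2m}) = \{\ast\}$ for every field $K$. The class of special groups is closed under direct products, so the subgroup $H = Sp(2p)\times Sp(2n-2p)$ is also special. In particular, for $K$ the function field of $X_{Sp}(p,n) = Sp(2n)/H$, the cohomology set $H^1(K, H)$ is trivial, and therefore the class of the generic fiber of the torsor $Sp(2n) \to X_{Sp}(p,n)$ in $H^1(K, H)$ is automatically trivial. This is exactly the rational speciality condition, so the pair $(Sp(2n), H)$ is rationally special.

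Applying Theorem~\ref{thmmain} to the rationally special $Sp(2n)$-variety $X_{Sp}(p,n)$ then yields that $M(X_{Sp}(p,n))$ lies in $TDM^{eff}_{gm}(k)$, as required. In fact, since $H$ is special, one even gets Zariski local triviality of the quotient map, and the proof of Theorem~\ref{thmmain} applies in its simplest form (the stratification is trivial, consisting of the single orbit, so one does not need to invoke \cite[Lem.~2.6]{H-R}).

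There is no real obstacle here; the only point to verify carefully is the classical fact that $Sp_{2m}$ is special and that speciality is preserved under products, which are standard and can be cited directly from Serre's original work. The statement is thus essentially a clean illustration that the main theorem applies to all homogeneous spaces $G/H$ where $H$ is a product of special groups, of which symplectic symmetric spaces are a natural family.
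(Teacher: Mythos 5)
Your proof is correct and follows essentially the same route as the paper: apply Theorem~\ref{thmmain}, citing the speciality of symplectic groups in the Grothendieck--Serre sense. You are in fact somewhat more careful than the paper's one-line argument, which simply says ``$Sp(2n)$ is special,'' whereas you correctly identify that the group whose speciality is actually needed is the stabilizer $H = Sp(2p)\times Sp(2n-2p)$, and you justify this via closure of specials under direct products.
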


\begin{proof}
    This follows from theorem \ref{thmmain} and that the group $Sp(2n)$ is special in the Grothendieck-Serre sense; see \cite[Proposition~5.10]{SGA3}.
\end{proof}

\subsection{Moduli of Lagrangian Splittings}

Let $V$ be a $2n$-dimensional vector space over a field $k$, equipped with a nondegenerate symplectic form 
\[
\omega: V \times V \longrightarrow k.
\]

\begin{definition}[Lagrangian Splitting]
A \emph{Lagrangian splitting} of $V$ is a decomposition
\[
V = L \oplus L^*,
\]
where 
\begin{itemize}
    \item $L \subset V$ is an $n$-dimensional Lagrangian subspace, i.e.\ $\omega|_L = 0$;
    \item $L^*$ is a complementary Lagrangian subspace (maximal isotropic) such that the restriction of $\omega$ induces a perfect pairing
    \[
    L \times L^* \longrightarrow k.
    \]
\end{itemize}
\end{definition}

\noindent
A point of the homogeneous space 
\[
\mathcal L Sp(n):=\frac{Sp(2n)}{GL(n)}
\]
  can be viewed either as a \emph{Lagrangian splitting} of a symplectic vector space $(V,\omega)$, that is, a decomposition $V = L \oplus L'$ into two transverse Lagrangian subspaces, or equivalently as a pair $(L,q)$ consisting of a Lagrangian subspace $L \subset V$ together with a nondegenerate symmetric bilinear form $q$ on $L$.  Indeed, given such a form $q \in \mathrm{Sym}^2(L^*)$, the graph $\Gamma_q = \{\,x + q(x)\mid x\in L\,\}$ defines a Lagrangian $L'$ transverse to $L$, and conversely every transverse $L'$ arises in this way.  This correspondence endows $Sp(2n)/GL(n)$ with the structure of an open algebraic variety parametrizing all possible real Lagrangian splittings.

\begin{corollary}
The motive $M(\mathcal L SP(n))$ is mixed Tate.
\end{corollary}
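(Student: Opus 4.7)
The plan is to apply Theorem \ref{thmmain} directly to the homogeneous space $\mathcal L Sp(n) = Sp(2n)/GL(n)$, following the same template as the proof of Corollary \ref{CorX_Sp(p,n)isMT}. Since the $Sp(2n)$-action is transitive, $\mathcal L Sp(n)$ consists of a single $Sp(2n)$-orbit, and the stabilizer at a base point corresponding to a chosen Lagrangian splitting $V = L \oplus L'$ is exactly $GL(L)\cong GL(n)$ (an element of $Sp(2n)$ preserves the splitting if and only if it acts as a linear automorphism on $L$ and by the dual automorphism on $L'$). Thus the whole question reduces to showing that the single pair $(Sp(2n), GL(n))$ is rationally special.

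To verify this I would invoke the classical fact that $GL_n$ is special in the Grothendieck--Serre sense \cite[Proposition~5.10]{SGA3}: every $GL_n$-torsor over any base scheme is Zariski-locally trivial. Equivalently, this is Hilbert's Satz 90, which asserts $H^1(K, GL_n) = \{*\}$ for every field $K$. In particular, the class in $H^1(K, GL(n))$ of the generic fibre of the $GL(n)$-torsor $Sp(2n) \to Sp(2n)/GL(n)$, with $K$ the function field of $\mathcal L Sp(n)$, is automatically trivial, so the pair $(Sp(2n), GL(n))$ is rationally special in the sense of the definition given in the paper.

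With rational specialness in hand, Theorem \ref{thmmain} applies immediately and yields that $M(\mathcal L Sp(n))$ is mixed Tate. I do not expect any real obstacle: the whole content of the argument is the invocation of Hilbert 90 for $GL(n)$, which is what makes the stabilizer special. The proof runs in perfect parallel to that of Corollary \ref{CorX_Sp(p,n)isMT}, where the stabilizer $Sp(2p)\times Sp(2n-2p)$ played the role that $GL(n)$ plays here; in both cases specialness of the stabilizer is the only input beyond Theorem \ref{thmmain}, and no stratification or further decomposition of $\mathcal L Sp(n)$ is needed since there is only one orbit to consider.
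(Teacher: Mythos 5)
Your proof is correct and takes essentially the same route as the paper: both identify the stabilizer of a Lagrangian splitting as $GL(n)$, invoke Hilbert 90 (specialness of $GL_n$) to conclude that the pair $(Sp(2n), GL(n))$ is rationally special, and then apply Theorem \ref{thmmain}. The paper additionally lists Corollary \ref{CorGismixedTate} among the ingredients, but that is already subsumed in the proof of Theorem \ref{thmmain}, so your omission of it is not a gap.
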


\begin{proof}
    
As an algebraic homogeneous space, one can view \(Sp(2n)\) as a \( Gl_n\)-bundle over 
\( \mathcal{L} Sp(n) \;\cong\; Sp(2n)/GL(n)\).

Now the statement follows from theorem \ref{thmmain}, corollary \ref{CorGismixedTate} and Hilbert Satz 90.
\end{proof}

\begin{remark}
    This space parametrizes all inequivalent choices of canonical coordinates up to linear transformations of the “position subspace” L. Different splittings correspond to different “polarizations” in geometric quantization.
\end{remark}

\begin{remark}
      For $k=\mathbb R$, in the context of geometric quantization, this space plays a central role: different choices of Lagrangian splittings (or equivalently of $(L,q)$) correspond to different \emph{polarizations} of the symplectic phase space, and hence to different Schrödinger realizations of the Heisenberg representation.  The metaplectic \myparagraph{(or Maslov)} structure provides the transition data between these realizations, yielding an analytic and geometric manifestation of the symplectic symmetry in quantum mechanics.
\end{remark}

\subsection{The space of nondegenerate alternating forms and Pfaffian Hypersurface}

Let $V$ be a fixed $2n$-dimensional $k$-vector space. The group $GL(V)\cong GL(2n)$ acts on the vector space $\Lambda^2 V^*$ by change of bases.

A \emph{nondegenerate alternating form} on a rank-$2n$ vector bundle $E$ over a scheme $S$ is a map
\[ \phi:\;E\otimes E\to\mathcal O_S \]
such that $\phi(v,v)=0$ for all local sections $v$ and the induced map $E\to E^\vee$ is an isomorphism.

The functor sending $S$ to the set of isomorphism classes of pairs $(E,\phi)$ as above is represented by the homogeneous space
\[ GL(2n)/Sp(2n), \]
via the orbit--stabilizer identification: fixing a reference nondegenerate alternating form $\omega_0\in \Lambda^2 V^*$, one has $Stab_{GL(V)}(\omega_0)=Sp(V,\omega_0)\cong Sp(2n)$ and the orbit of $\omega_0$ is the set of all nondegenerate alternating forms.

\begin{corollary}
    
Let $V$ be a vector space of dimension $2n$ over a field $k$ with $\operatorname{char}(k) \neq 2$.  
Consider the variety $\mathcal{B}$ whose points parametrize bilinear skew-symmetric non-degenerate forms $\beta : V \times V \to k$. The motive of $\mathcal{B}$ is mixed Tate. 

\end{corollary}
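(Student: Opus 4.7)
The plan is to recognize $\mathcal B$ as the smooth homogeneous space $GL(2n)/Sp(2n)$ described in the paragraph immediately preceding the corollary, and then deduce mixed Tateness by combining Theorem~\ref{G-bundles, two sided statement} with the classical fact that $Sp(2n)$ is a special group. Since the heavy lifting has already been done in Corollary~\ref{CorGismixedTate} (mixed Tateness of $M(GL(2n))$) and in the speciality result for $Sp(2n)$ used in Corollary~\ref{CorX_Sp(p,n)isMT}, essentially no new computation is required; the proof is a direct specialization.

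Concretely, I would first make the identification $\mathcal B \simeq GL(2n)/Sp(2n)$ precise. Fix a nondegenerate reference form $\omega_0 \in \Lambda^2 V^\vee$. Over an algebraically closed field of characteristic $\neq 2$ any two nondegenerate alternating forms are $GL(V)$-conjugate, so $GL(2n)$ acts transitively on $\mathcal B$, and the stabilizer of $\omega_0$ is tautologically $Sp(2n,\omega_0) \cong Sp(2n)$. Because $Sp(2n)$ is smooth in characteristic $\neq 2$, the orbit map is separable and the set-theoretic orbit description agrees with the scheme-theoretic quotient, yielding $\mathcal B \cong GL(2n)/Sp(2n)$ as smooth varieties. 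Next I would invoke speciality of $Sp(2n)$ in the sense of Grothendieck--Serre (see \cite[Proposition~5.10]{SGA3}), which asserts that every $Sp(2n)$-torsor is Zariski-locally trivial. Applied to the canonical projection
\[
\pi : GL(2n) \longrightarrow GL(2n)/Sp(2n) = \mathcal B,
\]
this exhibits $\pi$ as a Zariski-locally trivial principal $Sp(2n)$-bundle over the smooth variety $\mathcal B$. Since $M(GL(2n))$ is mixed Tate by Corollary~\ref{CorGismixedTate}, Theorem~\ref{G-bundles, two sided statement} gives mixed Tateness of $M(\mathcal B)$.

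Equivalently, I could package this as a direct instance of Theorem~\ref{thmmain}: the action of $GL(2n)$ on $\mathcal B$ is transitive with constant stabilizer conjugate to $Sp(2n)$, and Zariski-local triviality of the torsor $GL(2n)\to GL(2n)/Sp(2n)$ is in particular rational triviality, so $(GL(2n),Sp(2n))$ is a rationally special pair and $\mathcal B$ is a (single-stratum) rationally special $G$-variety. There is no serious obstacle in either route; the only point requiring a sentence of care is that the orbit $GL(2n)\cdot \omega_0 \subset \Lambda^2 V^\vee$ coincides, as a scheme, with the quotient $GL(2n)/Sp(2n)$, which follows from smoothness of $Sp(2n)$ under the standing assumption $\operatorname{char}(k)\neq 2$.
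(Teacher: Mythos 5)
Your proposal is correct and matches the paper's argument: both identify $\mathcal B$ with the homogeneous space $GL(2n)/Sp(2n)$ and invoke speciality of $Sp(2n)$ together with Theorem~\ref{thmmain} (or equivalently Theorem~\ref{G-bundles, two sided statement} plus Corollary~\ref{CorGismixedTate}) to conclude. The extra care you take with the scheme-theoretic orbit identification is a reasonable addition but is the same underlying route as in the paper.
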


\begin{proof}
    The group $GL(V)$ acts transitively on $\mathcal{B}$ by $(g \cdot \beta)(v,w) := \beta(g^{-1}v, g^{-1}w),~ g \in GL(V), \; v,w \in V$. The stabilizer equals $Sp(V, \omega)$. Note that this group is special and thus the corollary follows from theorem \ref  {thmmain}.

\end{proof}

We describe a natural projective compactification of the symmetric space \[X=GL(2n)/Sp(2n)\], define the Pfaffian polynomial which cuts out the boundary, describe the $GL(2n)$--orbit stratification of the boundary and give explicit representatives of stabilizer subgroups. Finally we prove these stabilizers are \emph{special} groups in the sense of Grothendieck--Serre. The exposition is intended to be self-contained for readers familiar with basic algebraic groups and classical linear algebra.

\paragraph{Projective compactification and the Pfaffian hypersurface}

Let $V$ be a $2n$-dimensional vector space over a field $k$ of characteristic $\neq 2$, and fix a nondegenerate alternating form $\omega_0 \in \Lambda^2 V^*$. 
The orbit of $\omega_0$ in the vector space $\Lambda^2 V^*$ under the natural action of $GL(V)$ is
\[
GL(V) \cdot \omega_0 \;\cong\; GL(2n)/Sp(2n),
\]
which parametrizes the nonzero nondegenerate alternating $2$-forms on $V$. 
Passing to projective classes identifies forms differing by a scalar, so the stabilizer of $[\omega_0]$ in $GL(V)$ is the group of symplectic similitudes
\[
GSp(2n) = \{\, g \in GL(V) \mid g \cdot \omega_0 = \lambda(g)\, \omega_0 \text{ for some } \lambda(g)\in k^\times \,\}.
\]
Hence the open $GL(V)$-orbit in the projective space $\mathbb{P}(\Lambda^2 V^*)$ is
\[
GL(V)\cdot[\omega_0] \;\cong\; GL(2n)/GSp(2n),
\]
which is precisely the locus of projective classes $[\omega]$ of nondegenerate alternating forms. 
The variety $GL(2n)/Sp(2n)$ is the affine cone (minus the origin) over this open subset, forming a principal $\mathbb{G}_m$-bundle
\[
GL(2n)/Sp(2n) \;\longrightarrow\; GL(2n)/GSp(2n).
\]
Consequently, the projective space $\mathbb{P}(\Lambda^2 V^*) \cong \mathbb{P}^{\binom{2n}{2}-1}$ provides a natural compactification
\[
\overline{X}_{\mathrm{proj}} = \mathbb{P}(\Lambda^2 V^*), \qquad 
X = GL(2n)/GSp(2n) = \mathbb{P}(\Lambda^2 V^*) \setminus D_n,
\]
where the boundary divisor $D_n$ is the projective Pfaffian hypersurface, defined as the locus of degenerate skew forms (the vanishing of the Pfaffian polynomial of degree $n$).

\begin{remark}(Pfaffian polynomial)
Choose a generator $\mathrm{vol}\in\Lambda^{2n}V^*$ (i.e. a basis of the one-dimensional space). For any $\beta\in\Lambda^2 V^*$ the $n$-fold wedge
\[ \beta^{\wedge n}\in\Lambda^{2n}V^* \]
is a scalar multiple of $\mathrm{vol}$. Define the \emph{Pfaffian} by the identity
\[ \beta^{\wedge n} = \mathrm{Pf}(\beta)\cdot \mathrm{vol}. \]
Thus $\mathrm{Pf}$ is a homogeneous polynomial on $\Lambda^2 V^*$ of degree $n$. One checks in coordinates that for the skew-symmetric matrix $A$ of $\beta$ one has
\[ \det(A) = \big(\mathrm{Pf}(A)\big)^2. \]

The vanishing locus $D_n=V(\mathrm{Pf})\subset \mathbb P(\Lambda^2 V^*)$ is exactly the projective set of \emph{degenerate} alternating forms (those with nontrivial radical). Hence the open orbit $X$ equals the complement of the Pfaffian hypersurface.
\end{remark}

\begin{proposition}
    The motive associated with the Pfaffian hypersurface $D_n$ in $\mathbb{P}^N$ is mixed Tate.
\end{proposition}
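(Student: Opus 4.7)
The plan is to exhibit $D_n$ as a rationally special $GL(V)$-variety and then invoke Theorem \ref{thmmain}. First I would describe the orbit structure: the natural action of $GL(V)$ on $\Lambda^2 V^*$ descends to $\mathbb P(\Lambda^2 V^*)$ and partitions $D_n$ into smooth locally closed orbits $O_1,\dots,O_{n-1}$, where $O_r$ consists of projective classes $[\omega]$ of alternating forms of rank $2r$. Since the rank is a lower semicontinuous $GL(V)$-invariant, this is a genuine stratification of $D_n$, so by \cite[Lem.~2.6]{H-R} it suffices to prove that each $M(O_r)$ is mixed Tate.

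Next I would compute the stabilizer $H_r$ of a representative $[\omega_r]\in O_r$. Fixing a splitting $V=W\oplus R$ with $R=\mathrm{rad}(\omega_r)$ and $\omega_r|_W$ nondegenerate, and writing $g$ in blocks against this decomposition, the condition $g^*\omega_r=\lambda\omega_r$ forces the $W$-block to lie in the similitude group $GSp(2r)$, the $R$-block to be an arbitrary element of $GL(2n-2r)$, and the off-diagonal piece to be an arbitrary element of $\mathrm{Hom}(W,R)$. This yields the semidirect product description
\[
H_r \;\cong\; \bigl(GSp(2r)\times GL(2n-2r)\bigr)\ltimes \mathrm{Hom}(W,R).
\]
Each of $Sp(2r)$, $GL(m)$, $\mathbb G_m$, and the vector (unipotent) groups is special in the Grothendieck--Serre sense, and speciality is closed under extensions of algebraic groups (via the long exact sequence of non-abelian cohomology), so $H_r$ is itself special. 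In particular, every $H_r$-torsor is Zariski locally trivial, so the pair $(GL(V),H_r)$ is rationally special.

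Finally, applying Theorem \ref{thmmain} (or, equivalently, applying Theorem \ref{G-bundles, two sided statement} to the Zariski locally trivial torsor $GL(V)\to GL(V)/H_r = O_r$, combined with the mixed Tateness of $M(GL(V))$ from Corollary \ref{CorGismixedTate}) yields that each $M(O_r)$ is mixed Tate, and the stratification argument then gives $M(D_n)$ mixed Tate. The main obstacle I anticipate is a careful verification of the stabilizer structure, in particular making sure that passing from the affine orbit to its projective class replaces $Sp(2r)$ by the full similitude group $GSp(2r)$, so that the scalar action on $\Lambda^2V^*$ is correctly absorbed; once the block decomposition and the speciality of $H_r$ are in hand, the rest is formal.
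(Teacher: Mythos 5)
Your proof is correct, but it takes a genuinely different route from the paper's. The paper argues by complementation: it notes that $D_n$ is the complement of the open orbit $X = GL(2n)/GSp(2n)$ inside $\mathbb P(\Lambda^2 V^*)$, proves $X$ is mixed Tate via Theorem \ref{thmmain} (because $GSp(2n)$ is special), and then deduces mixed Tateness of $D_n$ from the localization sequence together with the fact that $\mathbb P^N$ is mixed Tate and mixed Tate motives form a thick triangulated subcategory. You instead show $D_n$ is \emph{itself} a rationally special $GL(V)$-variety: you stratify $D_n$ by the orbits $O_r$ of projective classes of rank-$2r$ alternating forms, compute the stabilizer $H_r \cong (GSp(2r)\times GL(2n-2r))\ltimes\operatorname{Hom}(W,R)$, observe that $H_r$ is special as an iterated extension of special groups, and conclude with Theorem \ref{thmmain} plus the stratification lemma. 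Your stabilizer computation is essentially the one the paper carries out in the subsequent proposition on the affine orbits $\mathcal O_{2k}$ (where the relevant group is $Sp$ rather than $GSp$); you have correctly accounted for the passage to projective classes by replacing $Sp(2r)$ with the similitude group $GSp(2r)$. The paper's route is shorter, needing only the one open-orbit stabilizer and a localization triangle; yours requires examining every boundary stratum but is self-contained and shows the stronger fact that $D_n$ is rationally special with respect to the $GL(V)$-action, directly placing the proposition under the umbrella of Theorem \ref{thmmain}.
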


\begin{proof}
As we have seen above the Pfaffian hypersurface $D_n$ is the complement of $GL(2n)/GSp(2n)$ in $\mathbb{P}(\Lambda^2 V^*)$, therefore the mixed Tateness of $D_n$ follows from mixed Tateness of $GL(2n)/GSp(2n)$. Since the group $GSp(2n)$ is special, the latter follows from theorem \ref{thmmain}.
\end{proof}

\paragraph{Orbit stratification of the Pfaffian}
Alternating forms on $V$ have even rank. For $0\le k\le n$ write $r=2k$ and $m=2n-r$. The subset of forms of rank exactly $r$ is a single $GL(V)$--orbit; choose a normal form representative with respect to the decomposition $V=R\oplus W$ where $\dim R=m$, $\dim W=r$,
\[ \beta_k = 0_R\oplus \omega_W, \qquad \omega_W \text{ nondegenerate on }W. \]

Concretely, in block coordinates relative to $V=R\oplus W$ the form has matrix
\[ \beta_k = \begin{pmatrix}0_m & 0\\ 0 & J_{2k}\end{pmatrix},\qquad J_{2k}=\begin{pmatrix}0&I_k\\-I_k&0\end{pmatrix}. \]

The corresponding affine orbit \( \mathcal O_{2k}\) consists of all \( \beta \) in \(\Lambda^2 V^*\) with \( rank(\beta)=2k\).


\begin{proposition}
    The motive associated with $O_{2k}$ is mixed Tate.
\end{proposition}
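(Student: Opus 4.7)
The plan is to realize $O_{2k}$ as a single homogeneous space $GL_{2n}/H$, determine $H$ explicitly, verify that the pair $(GL_{2n}, H)$ is rationally special, and then apply Theorem~\ref{thmmain}.

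First, since $GL(V) \cong GL_{2n}$ acts transitively on $O_{2k}$, the orbit identifies with $GL_{2n}/H$ where $H = \mathrm{Stab}(\beta_k)$ for the block normal form $\beta_k = 0_R \oplus \omega_W$. A routine block-matrix computation with respect to the decomposition $V = R \oplus W$ (with $\dim R = m := 2n-2k$) shows that the stabilizer consists of block upper-triangular matrices:
\[
H \;=\; \left\{\begin{pmatrix} A & B \\ 0 & C \end{pmatrix} : A \in GL_m,\ C \in Sp_{2k},\ B \in \mathrm{Mat}_{m \times 2k}\right\}.
\]
Thus $H \cong U \rtimes L$ with Levi factor $L = GL_m \times Sp_{2k}$ and unipotent radical $U \cong \mathbb{A}^{2km}$.

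Next, I would verify rational specialness of the pair, i.e.\ $H^1(K, H) = *$ where $K = k(O_{2k})$. The Levi $L$ is a product of special groups in the Grothendieck--Serre sense (\cite[Proposition~5.10]{SGA3}); in particular, Hilbert 90 for $GL_m$ together with the fact that any two nondegenerate alternating forms on a $2k$-dimensional $K$-vector space are equivalent gives $H^1(K, L) = *$. For the vector group $U$, additive Hilbert 90 yields $H^1(K, U) = 0$. The non-abelian cohomology sequence attached to $1 \to U \to H \to L \to 1$ then forces $H^1(K, H) = *$, so the class of the generic fiber of $GL_{2n} \to O_{2k}$ vanishes, and the pair $(GL_{2n}, H)$ is rationally special. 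Since $O_{2k}$ is a single-orbit rationally special $GL_{2n}$-variety, Theorem~\ref{thmmain} yields that $M(O_{2k})$ is mixed Tate.

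The delicate point, which I expect to be the main obstacle, is the non-reductivity of $H$: unlike the preceding corollaries that were handled by specialness of reductive groups such as $Sp(2n)$ or $GSp(2n)$, the unipotent radical $U$ cannot be bypassed, so the argument genuinely relies on the rational-specialness formulation (working over the function field of the orbit) together with the non-abelian long exact sequence. The group $H$ itself need not be special in the strict Grothendieck--Serre sense (since vector groups can fail to have vanishing $H^1$ over non-affine bases), but rational specialness over the generic point is exactly what Theorem~\ref{thmmain} requires.
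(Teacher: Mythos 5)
Your proposal is correct and follows essentially the same route as the paper: identify $O_{2k}\cong GL_{2n}/H_k$ with the explicit block stabilizer, use the extension $1\to U\to H_k\to GL_m\times Sp_{2k}\to 1$, deduce vanishing of $H^1$ over the function field, and apply Theorem~\ref{thmmain}. The paper states the conclusion slightly more strongly: since special groups (in the Grothendieck--Serre sense) are closed under extensions, $H_k$ is itself special, so the pair is automatically rationally special. Your closing caveat that $H$ \emph{``need not be special in the strict Grothendieck--Serre sense''} is therefore unnecessary: vector groups are special (any $\mathbb{G}_a^N$-torsor is a torsor under a quasi-coherent sheaf, hence Zariski-locally trivial), $GL_m\times Sp_{2k}$ is special, and an extension of a special group by a special group is special, exactly as the paper asserts. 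This does not affect the validity of your argument, which already supplies the cohomological details the paper leaves implicit.
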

\begin{proof}
    
The orbit $O_{2k}$ is isomorphic to the homogeneous space $GL(2n)/H_k$, where the stabilizer subgroup $H_k$ (stabilizer of the normal form) is the block subgroup
\begin{equation}\label{eq:Hk-block}
H_k = \left\{\begin{pmatrix}a & b\\ 0 & s\end{pmatrix} : a\in GL(R),\; s\in Sp(W),\; b\in\operatorname{Hom}(W,R)\right\}.
\end{equation}

Thus $H_k$ fits into the exact sequence
\begin{equation}\label{eq:extension}
1\to U\to H_k \xrightarrow{\;\pi\;} GL(R)\times Sp(W)\to 1,
\end{equation}
where $U\cong\operatorname{Hom}(W,R)\cong\mathbb G_a^{m r}$ is the unipotent radical (upper-right blocks $b$). The map $\pi$ is the projection to block diagonal.

Since $U$ and $GL(R)\times Sp(W)$ are \emph{special} in the sense of Grothendieck--Serre, the groups $H_k$ are \emph{special} as well. Now the statement follows from theorem \ref{thmmain}.

\end{proof}

\begin{minipage}[t]{0.9\linewidth}
\myparagraph{
\noindent
\small\textbf{Esmail Arasteh Rad}, School of Mathematics, Institute for Research in Fundamental Sciences (IPM), P.O. Box: 19395-5746, Tehran, Iran 
email: earasteh@ipm.ir\\
\\[1mm]
}

\noindent
\small\textbf{Somayeh Habibi}, School of Mathematics, Institute for Research in Fundamental Sc
iences (IPM), P.O. Box: 19395-5746, Tehran, Iran
 email: \href{shabibi@ipm.ir}{shabibi@ipm.ir}

\end{minipage}

\end{document}